\newtheorem{theorem}{Theorem}[section]
\newtheorem{lemma}{Lemma}
    \numberwithin{remarks}{section}
    \newtheorem{proposition}[theorem]{Proposition}
\numberwithin{theorem}{section} \numberwithin{definition}{section}
\newcommand{\eqnsection}{
    \renewcommand{\theequation}{\thesection.\arabic{equation}}
    \makeatletter
    \csname @addtoreset\endcsname{equation}{section}
    \makeatother}
\newcommand{\dd}{\delta}
\newcommand{\lar}{\longrightarrow}
\newcommand{\eps}{\varepsilon}
\newcommand{\aaa}{\alpha}
\newcommand{\reals}{\mathbb{R}}
\newcommand{\unn}{\bigcup}
\def \be{\begin{equation}}
\def \ee{\end{equation}}
\def \bt{\begin{theorem}}
\def \et{\end{theorem}}
\def \bl{\begin{lemma}}
\def \el{\end{lemma}}
\def \bea{\begin{eqnarray}}
\def \eea{\end{eqnarray}}
\def \bas{\begin{eqnarray*}}
\def \eas{\end{eqnarray*}}
\def \lll{\label}
\newcommand {\rrr}[1]{(\ref{#1})}
\def \aa{\alpha}
\def \bb{\beta}
\def \ga{\gamma}
\def \la{\lambda}
\def \ka{\kappa}
\def \ff{\infty}
\def \R{\mathbb{R}}
\def \AA{{\cal A}}
\def \BB{{\cal B}}
\def \DD{{\cal D}}
\def \FF{{\cal F}}
\def \II{{\cal I}}
\def \RR{{\mathbb R}}
\def \TT{{\cal T}}
\def \E{\mathbf{E}}
\def \P{\mathbf{P}}
\def \({\left(}
\def \){\right)}
\def \nn{\nonumber}
\def \vski{\vspace{12pt}}
\def \sgn{\mathop{\mathrm{sgn}}}
\def \bc{\begin{center} }
\def \ec{\end{center} }
\def \bs{\begin{slide} }
\def \es{\end{slide} }
\def \noi{\noindent}
\def\dslt{\hat \alpha_{t}'}
\def\dsltA{\hat \alpha_{t}'(y,A)}
\def\dslteps{\hat \alpha_{t,\eps}'}
\def\dsltepst{\hat \alpha_{\tilde{t},\eps}'}
\def\dsltepse{\hat \alpha_{t,\tilde\eps}'}
\def\square{{\vcenter{\vbox{\hrule height.3pt
         \hbox{\vrule width.3pt height5pt \kern5pt
            \vrule width.3pt}
         \hrule height.3pt}}}}
\begin{document}
\title{H\"older continuity and occupation-time formulas for fBm self-intersection local time and its derivative}
\author{Paul Jung \quad Greg Markowsky}
\maketitle

\abstract{We prove joint H\"older continuity and an occupation-time formula for the self-intersection local time of fractional Brownian motion.  Motivated by an occupation-time formula, we also introduce a new version of the derivative of self-intersection local time for fractional Brownian motion and prove H\"older conditions for this process.  This process is related to a different version of the derivative of self-intersection local time studied by the authors in a previous work.}

\vspace{0.5cm}

{\bf Key words:} Intersection local time, Fractional Brownian motion, Occupation-time formula.

{\bf AMS Subject classification:} 60G22, 60J55

\section{Introduction}

The self-intersection local time of Brownian motion, formally defined as
\be \label{def:ILT}
\aa_t(y) := \int_0^t \int_0^s \dd(B_s -B_r -y)\, dr\, ds,
\ee
 was introduced in \cite{varad1969} and has since been well studied due to its various applications in physics, ranging from polymers to quantum field theory (see \cite{dynkin1988self, chen2010random} and the references therein). The self-intersection local time of fractional Brownian motion (FBM), $B^H_t$, was first investigated in \cite{rosen1987} in the planar case and was
further investigated, using tools from Malliavin calculus, by \cite{hu2001} and \cite{hu2005}. In particular, \cite{hu2005} showed its existence in dimension $d$ whenever the Hurst parameter
of FBM satisfies
$H<1/d$ (see \eqref{existence limit} below).

In the present work, we show that in one dimension, $\aa_t(y)$ for FBM is jointly H\"older continuous, in space and time, of any order below $1-H$. This result refines the fact that $\aa_t(y)$ is H\"older continuous in time of any order less than $1-H$ which can be derived from \cite[Thm 1.2]{xiao1997holder} along with the representation
\be\label{1.2}
\aa_t(y)=\frac{1}{2}\int_{\R} L^{x+y}_t L^{x}_t \, dx,
\ee
where $L^x_t$ is the local time of $B^H_t$ (Eq. \ref{1.2} also trivially shows $\aa_t(y)$ to be jointly continuous in space and time). We note that H\"older continuity, in time, of the intersection local time of independent FBMs has been investigated in \cite{wu2010regularity}; however,
techniques for self-intersections and independent intersections are typically different. For example, in the independent intersections case, \eqref{1.2} does not apply. Presently, our method of analysis boils down to showing that Kolmogorov's continuity criterion holds under various conditions.
An immediate by-product we obtain is an occupation-time formula which has previously shown only for $H=1/2$ \cite[Thm 1]{rosen2005}.

Let $\DD=\{(r,s): 0 < r < s < t\}.$
Motivated by spatial integrals with respect to local time which were developed in \cite{rogers1991local}, \cite{rosen2005} introduced a formal derivative (made rigorous) of $\aa_t(y)$ in the
one-dimensional Brownian case:
\be
\label{def:DILT}
\aa_t'(y) = -\int\!\int_\DD \dd'(B_s -B_r -y)\, dr\, ds.
\ee
This process has been further studied in \cite{markowsky2008renormalization, jung2012tanaka, hu2010central} as well as some of their references.
An FBM version of \eqref{def:DILT} was first considered in the works of \cite{yan2008, yan2009integration}.
Later, using a Tanaka formula as guiding intuition, \cite{jung2012tanaka} rigorously extended \eqref{def:DILT} to one-dimensional FBMs with $H<2/3$ by
\be \label{bsl2} \tilde\alpha_{t}'(y) := - \int\!\int_\DD
\dd'(B^H_s-B^H_r-y) (s-r)^{2H-1} \,dr \,ds.
\ee
An open problem stated in \cite{jung2012tanaka} was to prove the joint continuity, in space and time, of \eqref{bsl2}.

Here, we consider a different extension of \eqref{def:DILT} to the case of FBM which is guided by an occupation-time formula, rather than a Tanaka formula:
\be \label{bsl3}\dslt(y) := -\int\!\int_\DD
\dd'(B^H_s-B^H_r-y) \,dr \,ds.
\ee
Due to the absence of the kernel $(s-r)^{2H-1}$, we are able to show not only joint continuity, but also joint H\"older continuity of any order less than $1-2H, H<1/2$.


In preparation for our results, we set


\be
f_\eps(x) := \frac{1}{\sqrt{2\pi\eps}} e^{-\frac{1}{2}x^2/\eps} = \frac{1}{2\pi} \int_{\reals} e^{ipx} e^{-\eps p^2/2} dp,
\ee
and
\be \label{klam}
f_\eps'(x) := \frac{d}{dx} f_\eps(x) = \frac{-x}{\sqrt{2\pi \eps^3}} e^{-\frac{1}{2}x^2/\eps}=\frac{i}{2\pi} \int_{\reals} p e^{ipx} e^{-\eps p^2/2} dp.
\ee
The formal definitions \eqref{def:ILT} and \eqref{bsl3} are made rigorous whenever the following limits exist:
\bea
\label{existence limit}
\aa_t(y):=\lim_{\eps\to 0}\int\!\int_\DD
f_\eps(B^H_s-B^H_r-y) \,dr \,ds,\\
\dslt(y):=-\lim_{\eps\to 0}\int\!\int_\DD \label{existence limit2}
f'_\eps(B^H_s-B^H_r-y) \,dr \,ds.
\eea
In \cite{hu2005}, the above limit for $\aa_t(y)$ was shown to exist in $L^2(\Omega)$ for all $0<H<1$ (in dimension $d=1$).  For $\dslt(y)$ we have

\begin{proposition}\label{prop:existence}
If $H<2/3$, then the limit \eqref{existence limit2} exists  in $L^2(\Omega,\FF,\P)$.
\end{proposition}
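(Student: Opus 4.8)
The plan is to show that the family $\dslteps(y):=-\int\!\int_\DD f_\eps'(B^H_s-B^H_r-y)\,dr\,ds$ is Cauchy in $L^2(\Om,\FF,\P)$ as $\eps\downarrow0$; by completeness this yields the limit \eqref{existence limit2}. Since
\[
\|\dslteps(y)-\dsltepse(y)\|_{L^2}^2=\E\big[\dslteps(y)^2\big]-2\,\E\big[\dslteps(y)\dsltepse(y)\big]+\E\big[\dsltepse(y)^2\big],
\]
it suffices to prove that $\E[\dslteps(y)\dsltepse(y)]$ tends to one and the same finite limit however $\eps,\tilde\eps\downarrow0$. First I would insert the Fourier representation \eqref{klam}, apply Fubini, and use that $\big(B^H_s-B^H_r,\,B^H_{s'}-B^H_{r'}\big)$ is a centered Gaussian vector, which rewrites $\E[\dslteps(y)\dsltepse(y)]$ as an integral over $\DD\times\DD$ (in $(r,s,r',s')$) of the Gaussian integral $\int_{\R^2}pq\,e^{-iy(p+q)}\exp\!\big(-\tfrac12(\la_\eps p^2+2\mu pq+\va_{\tilde\eps}q^2)\big)\,dp\,dq$, where $\la:=|s-r|^{2H}$, $\va:=|s'-r'|^{2H}$, $\mu:=\mathrm{Cov}\big(B^H_s-B^H_r,\,B^H_{s'}-B^H_{r'}\big)$, and $\la_\eps:=\la+\eps$, $\va_{\tilde\eps}:=\va+\tilde\eps$. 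Carrying out this Gaussian integral explicitly and setting $D_{\eps,\tilde\eps}:=\la_\eps\va_{\tilde\eps}-\mu^2>0$, one finds the integrand equals, up to a positive constant,
\[
\frac{1}{D_{\eps,\tilde\eps}^{3/2}}\left(\mu+\frac{y^2\,(\la_\eps-\mu)(\va_{\tilde\eps}-\mu)}{D_{\eps,\tilde\eps}}\right)\exp\!\left(-\frac{y^2\,(\la_\eps+\va_{\tilde\eps}-2\mu)}{2\,D_{\eps,\tilde\eps}}\right),
\]
which for $y=0$ is simply $\mu\,D_{\eps,\tilde\eps}^{-3/2}$.

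Next I would pass to the limit $\eps,\tilde\eps\downarrow0$ by dominated convergence. Write $D:=\la\va-\mu^2$ for the determinant of the covariance matrix of the two increments and $V:=\mathrm{Var}\big(B^H_s-B^H_r-B^H_{s'}+B^H_{r'}\big)=\la+\va-2\mu\ge0$. Since $\la_\eps\ge\la$, $\va_{\tilde\eps}\ge\va$ give $D_{\eps,\tilde\eps}\ge D$, since the exponential factor is $\le1$, and since $|\mu|\le\sqrt{\la\va}$, $|\la-\mu|\le\sqrt{\la V}$, $|\va-\mu|\le\sqrt{\va V}$ with $\la,\va,V$ bounded on $\DD\times\DD$, one checks that the integrand converges pointwise off the Lebesgue-null set $\{D=0\}$ and is dominated, uniformly for $\eps,\tilde\eps\in(0,1]$, by a fixed function whose integrability is governed by that of $|\mu|/D^{3/2}$ (plus a Gaussian-damped expression for the extra $y\ne0$ term). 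Hence the proposition reduces to
\[
\int_{\DD\times\DD}\frac{|\mu|}{D^{3/2}}\,dr\,ds\,dr'\,ds'<\infty\qquad\text{whenever }H<2/3,
\]
together with its easier analogue for the second term, where the exponential factor supplies the decay.

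Proving this bound is the crux and the main obstacle; it amounts to a careful analysis of where and how fast $D\to0$ on $\DD\times\DD$. The key tool is the local nondeterminism of fractional Brownian motion: there is $\ka=\ka(H,t)>0$ such that $\mathrm{Var}\big(\sum_{j=1}^n a_j(B^H_{t_j}-B^H_{t_{j-1}})\big)\ge\ka\sum_{j=1}^n a_j^2(t_j-t_{j-1})^{2H}$ for every $0\le t_0<t_1<\cdots<t_n\le t$ and all real $a_j$. Writing $B^H_s-B^H_r$ and $B^H_{s'}-B^H_{r'}$ in terms of the (at most three) consecutive increments spanned by $\{r,s,r',s'\}$ and applying this to their Gram matrix gives $D\gtrsim\sum w_jw_k$, summed over the pairs of those increments whose $2\times2$ incidence minor is non-zero, where $w_j$ is the $2H$-th power of the length of the $j$-th increment. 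I would then split $\DD\times\DD$ into the finitely many regions fixed by the ordering of $r,s,r',s'$, bound $D$ below as above, bound $\mu$ above via the elementary identity $\mu=\tfrac12\big(|s-r'|^{2H}+|r-s'|^{2H}-|s-s'|^{2H}-|r-r'|^{2H}\big)$ (and its Taylor expansion when one of these distances is small), and change variables to the increment lengths in each region. The binding region is the neighborhood of the ``diagonal'' $\{r=r',\ s=s'\}$: there $D\gtrsim|s-r|^{2H}\big(|r-r'|^{2H}+|s-s'|^{2H}\big)$ while $|\mu|$ stays of order $|s-r|^{2H}$, so with $a:=|r-r'|$, $c:=|s-s'|$ the local integral reduces to $\iint(a^{2H}+c^{2H})^{-3/2}\,da\,dc$, which converges exactly when $H<2/3$ --- this is where the hypothesis is sharp. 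The same threshold reappears where one of $[r,s]$, $[r',s']$ shrinks to a point: there the bound $|\mu|\le\sqrt{\la\va}$ is too weak, and one must instead use that $|\mu|$ is of the order of the shrinking length $u$ (again from the Taylor expansion), which gives $\int_0 u^{1-3H}\,du$. In all remaining regions --- including the corner where all four points coalesce, handled by scaling, and wherever $V/D\to\infty$, where the Gaussian factor in the $y\ne0$ term decays faster than any power of $D$ --- the integrands are less singular, and the required estimates hold for $H<2/3$ with room to spare.
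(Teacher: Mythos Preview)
Your approach is correct and coincides with the proof the paper invokes by reference: the paper omits a direct argument for this proposition and points to Lemma~3.1 of \cite{hu2001} together with \cite{yan2008} (corrected in \cite{jung2012tanaka}), and that lemma is precisely the covariance-determinant lower bound $D=\la\va-\mu^2\gtrsim\cdots$ obtained from local nondeterminism that you use to control $\int_{\DD\times\DD}|\mu|\,D^{-3/2}$. Your identification of the critical regions (the neighborhood of $\{r=r',s=s'\}$ and the degeneration of a single interval) and of the exponent $1-3H$ that forces $H<2/3$ matches that analysis.

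It may be worth noting, for comparison, that the paper also contains an alternative route to $L^2$ existence embedded in the proof of Theorem~\ref{bg1} (though only carried out there for $H<1/2$): rather than computing the bilinear form $\E[\dslteps(y)\dsltepse(y)]$ and passing to the limit by dominated convergence, one bounds $\E[|\dslteps(y)-\dsltepse(y)|^n]$ directly via the Fourier representation, rewrites increments as $\sum u_j(B^H_{\ell_{j+1}}-B^H_{\ell_j})$, applies local nondeterminism in the form \eqref{van}, and integrates each $a_j=\ell_{j+1}-\ell_j$ using \eqref{clay}. That method trades the explicit $2\times2$ Gaussian computation and case-by-case ordering analysis for the combinatorics of the $u_j$'s, and it yields a quantitative rate $|\eps-\tilde\eps|^{n\la}$ (hence simultaneous a.s.\ and $L^p$ existence) at the price of a more restrictive range of $H$; your second-moment/determinant approach reaches the full range $H<2/3$ but gives only $L^2$ convergence without a rate.
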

The proof of the above proposition is an application of Lemma 3.1 in \cite{hu2001}, and can be deduced from the arguments of \cite{yan2008} with a slight correction given in \cite[Appendix]{jung2012tanaka}. It is therefore omitted. The following theorem now states the main results advertised in the title of this work:

\begin{theorem}\label{bg1}
Fix $0<H<1$ and suppose $t\in[0,T]$. The process $\aa_t(y)$ exists a.s. and in $L^p(\Omega)$ for all $p\in(0,\ff)$, and has a modification which is a.s. jointly  H\"{o}lder continuous in $(y,t)$ of any order less than $1-H$ and H\"older continuous in $y$ of any order
less than $\min(\frac{1}{H}-1,1)$. Also, if $g$ is continous, then
\begin{eqnarray} \label{oct1}
\int \! \int_\DD g(B^H_s -B^H_r)\, dr\, ds = \int_{\RR} g(y) \aa_t(y) dy,
\end{eqnarray}

If $H<1/2$, then  the process $\dslt(y)$ exists a.s. and in $L^p(\Omega)$ for all $p\in(0,\ff)$ and has a modification which is a.s. jointly  H\"{o}lder continuous in $(y,t)$ of any order less than $1-2H$ and H\"older continuous in $y$ of
any order less than $\min(1/H-2,1)$.
Moreover,  $\dslt(y)= \frac{d}{dy} \aa_t(y)$.
Finally, for any $g\in C^1$
\begin{eqnarray}
\int \! \int_\DD g'(B^H_s -B^H_r)\, dr\, ds = -\int_{\RR} g(y) \dslt(y) dy. \label{occtim}
\end{eqnarray}
\end{theorem}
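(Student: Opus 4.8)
The plan is to establish all four conclusions by the same machinery: moment estimates on increments followed by Kolmogorov's continuity criterion, with the algebraic identities extracted as a by-product of the approximation scheme. First I would work with the mollified objects $\aa_{t,\eps}(y) = \int\!\int_\DD f_\eps(B^H_s - B^H_r - y)\,dr\,ds$ and $\dslteps(y) = -\int\!\int_\DD f'_\eps(B^H_s - B^H_r - y)\,dr\,ds$, expand an arbitrary power $\E\big|\aa_{t,\eps}(y) - \aa_{t',\eps}(y')\big|^{2n}$ (and similarly for the derivative) using the Fourier representations of $f_\eps$ and $f'_\eps$, and evaluate the resulting Gaussian expectations. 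The key quantity that appears is the determinant of the covariance matrix of the increments $B^H_{s_j} - B^H_{r_j}$ over the $2n$ copies of $\DD$; by the local nondeterminism property of FBM (the standard Berman-type bound used in \cite{hu2005} and \cite{xiao1997holder}), this determinant is bounded below by a product $\prod_j (s_j - r_j)^{2H}$ up to ordering, which makes the momentum integrals converge precisely when $H < 1/d = 1$ for $\aa_t$ and, after the extra factor of $p$ from each $f'_\eps$, when $H < 1/2$ for $\dslt$. Tracking the dependence on $y - y'$ and $t - t'$ in these integrals — splitting off a factor $|p|^{\beta}$ to produce $|y-y'|^\beta$, and restricting the time-simplex to produce $|t-t'|^{\gamma}$ — yields increment bounds of the form $C_n\big(|y-y'| + |t-t'|\big)^{2n(1-H-\vep)}$ for $\aa_t$ and $C_n\big(|y-y'| + |t-t'|\big)^{2n(1-2H-\vep)}$ for $\dslt$, uniformly in $\eps$, together with the purely spatial exponents $\min(1/H - 1, 1)$ and $\min(1/H-2,1)$ coming from the case where only the momentum-integral (not a time restriction) is exploited. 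Since $n$ is arbitrary, Kolmogorov gives the stated Hölder moduli for each mollification, and the uniform-in-$\eps$ bounds plus the $L^2$ convergence from \cite{hu2005} and Proposition \ref{prop:existence} let us pass to the limit and inherit the same modulus for $\aa_t(y)$ and $\dslt(y)$, with the $L^p$ statements falling out of the same moment bounds.

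For the occupation-time formula \eqref{oct1}, I would start from the elementary identity $\int\!\int_\DD g * f_\eps(B^H_s - B^H_r)\,dr\,ds = \int_\RR g(y)\,\aa_{t,\eps}(y)\,dy$, which holds pathwise by Fubini since $f_\eps(x - y)$ integrated against $g$ is just the mollification $g*f_\eps$ evaluated at $x$. Letting $\eps \to 0$: the left side converges to $\int\!\int_\DD g(B^H_s - B^H_r)\,dr\,ds$ by continuity of $g$ and dominated convergence (the double integral is over a bounded simplex and $g$ is bounded on the a.s.-bounded range of the increments), while on the right side one uses the joint continuity just proved for $\aa_t(\cdot)$ together with the fact that $\aa_{t,\eps}(\cdot) \to \aa_t(\cdot)$ in a strong enough sense — e.g. locally uniformly in $y$ after passing to a subsequence, which again follows from the uniform moment bounds and Kolmogorov — to justify $\int g(y)\aa_{t,\eps}(y)\,dy \to \int g(y)\aa_t(y)\,dy$. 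The formula \eqref{occtim} is obtained the same way starting from $\int\!\int_\DD (g'*f_\eps)(B^H_s - B^H_r)\,dr\,ds = -\int_\RR g(y)\,\dslteps(y)\,dy$ (the sign and the derivative landing on $g$ come from integrating $f'_\eps(x-y) = -\partial_y f_\eps(x-y)$ by parts in $y$, with no boundary terms since $g$ has compact-support-like decay or one localizes), then passing to the limit. The identity $\dslt(y) = \tfrac{d}{dy}\aa_t(y)$ then follows by taking $g$ to range over a family of test functions in \eqref{occtim}, integrating by parts, and comparing with \eqref{oct1}: for all $\vf \in C_c^\infty$, $\int \vf(y)\dslt(y)\,dy = -\int \vf'(y)\aa_t(y)\,dy$, which is exactly the statement that $\dslt$ is the distributional (hence, by the proven continuity, classical) derivative of $\aa_t$.

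The main obstacle I expect is the moment computation for the derivative process $\dslt$ — specifically, controlling $\E\big|\dslteps(y) - \dslteps(y')\big|^{2n}$ uniformly in $\eps$. Each of the $2n$ factors of $f'_\eps$ contributes a factor $p_j$ in the momentum variables, so one must bound an integral of the form $\int_{(\RR)^{2n}} \prod_j |p_j| \cdot \exp\big(-\tfrac12 p^{\mathsf T} \Sigma p\big)\,dp$ integrated over the time variables, where $\Sigma$ is the (degenerate-looking) covariance matrix over overlapping increments from the same path. The honest way through this is the local-nondeterminism change of variables that diagonalizes $\Sigma$ blockwise after reordering the $2n$ times, giving $\det\Sigma \gtrsim \prod_j (s_{\sigma(j)} - s_{\sigma(j)-1})^{2H}$ and, crucially, letting the numerator $\prod|p_j|$ be absorbed into the Gaussian at the cost of $\prod (s_{\sigma(j)} - s_{\sigma(j)-1})^{-H}$; the time integral $\int \prod (s_{\sigma(j)} - s_{\sigma(j)-1})^{-H - H \cdot[\text{extra from }\beta]} \,ds$ over the simplex is finite exactly when the total exponent on each gap exceeds $-1$, which is what forces $H < 1/2$ and caps the spatial Hölder exponent at $1/H - 2$. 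Getting the bookkeeping of these exponents right across all $2n$ copies, and checking that the constants $C_n$ grow slowly enough in $n$ not to spoil the Kolmogorov application (they will, being of order $(Cn)^{n}$ or so from the Gaussian moments, which is fine since Kolmogorov only needs polynomial-in-increment bounds for each fixed large $n$), is the technical heart of the argument. Everything else — the pathwise Fubini identities, the limit passages, the derivative identification — is routine once these uniform moment bounds are in hand.
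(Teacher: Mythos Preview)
Your proposal is essentially the paper's own argument: Fourier representation, local nondeterminism after reordering the $2n$ time points, moment bounds uniform in $\eps$, Kolmogorov's criterion, then locally uniform convergence to pass the mollified occupation-time identities to the limit and to identify $\dslt = \tfrac{d}{dy}\alpha_t$. The overall strategy and the list of ingredients match.

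One point where your sketch is looser than the actual argument and could go wrong if executed naively: the phrase ``letting the numerator $\prod|p_j|$ be absorbed into the Gaussian at the cost of $\prod(s_{\sigma(j)}-s_{\sigma(j)-1})^{-H}$'' hides the real combinatorial step. After the reordering $\ell_1\le\cdots\le\ell_{2n}$ one writes $\prod_k e^{ip_k(B_{s_k}-B_{r_k})}=\prod_j e^{iu_j(B_{\ell_{j+1}}-B_{\ell_j})}$ with each $u_j$ a specific linear combination of the $p_k$'s; local nondeterminism then gives the bound $e^{-C\sum_j|u_j|^2 a_j^{2H}}$ with $a_j=\ell_{j+1}-\ell_j$. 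The $p_k$'s are \emph{not} aligned with single gaps: each $p_k$ equals $u_{j_1}-u_{j_1-1}$ at its left endpoint and $u_{j_2-1}-u_{j_2}$ at its right endpoint, so one splits $|p_k|^{1+\lambda}=|p_k|^{(1+\lambda)/2}|p_k|^{(1+\lambda)/2}$ and bounds each half by the adjacent $|u_j|$'s. This ensures each $|u_j|$ acquires exponent at most $1+\lambda$ in the numerator, which, after the time integration $\int_0^t e^{-C|u_j|^2a^{2H}}da\le C(1+|u_j|^{1/H})^{-1}$ and a linear change to $n$ spanning $u$-variables, is exactly what makes the $\R^n$ integral converge under $1/H-(1+\lambda)>1$. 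Without this splitting, a single $|u_j|$ could collect several full powers of $|p_k|$ and the integral would diverge. You flag ``bookkeeping of these exponents'' as the technical heart, which is right; just be aware that the bookkeeping is this endpoint-splitting, not merely counting.

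A minor difference: for the $t$-H\"older continuity of $\alpha_t(y)$ the paper does not redo the moment computation but instead invokes the representation $\alpha_t(y)=\tfrac12\int L_t^{x+y}L_t^x\,dx$ together with Xiao's H\"older estimates for $L_t^x$. Your direct approach should also work but is not what is done there.
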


It will be clear from the proof of the above that any discontinuities of $\dslt$ for $H\ge 1/2$ can only occur at $y=0$. We next present evidence that there are indeed discontinuities at $y=0$ when $H\ge 1/2$. We also give the ``expected''  behavior of $\dslt(y)$ as $y\to 0$.

\begin{proposition} \label{exproposition} For all $0<H<2/3$,
$\E[\dslt(y)]$ is continuous in $y$ for $y \neq 0$.  It is also continuous at $y=0$ if and only if $H<1/2$. Moreover, at $y=0$:

\begin{itemize} \label{}

\item[(i)] \lll{disney} If $1/3< H< 2/3$, then $$\lim_{y \lar 0} \frac{\E[\dslt(y)]\sgn(y)}{|y|^{{1/H}-2}}= \frac{-t}{\sqrt{2\pi}} \int_{0}^{\ff} v^{-3H}\exp(-v^{-2H}/2) dv.$$

\item[(ii)] If $H = 1/3$, then $$\lim_{y \lar 0} \frac{\E[\dslt(y)]}{y\log{|y|}}=\frac{3t}{\sqrt{2\pi}}.$$

\item[(iii)] If $0<H < 1/3$, then $$\lim_{y \lar 0} \frac{\E[\dslt(y)]}{y}=\frac{-t^{2-3H}}{(1-3H)\sqrt{2\pi}}.$$

\end{itemize}
\end{proposition}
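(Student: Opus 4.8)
The plan is to compute $\E[\dslt(y)]$ explicitly via the Fourier representation in \eqref{klam} and then analyze the resulting integral as $y\to 0$. Starting from the approximations $f'_\eps$, we have
\[
\E\Big[\int\!\int_\DD f'_\eps(B^H_s-B^H_r-y)\,dr\,ds\Big]=\frac{i}{2\pi}\int\!\int_\DD\int_\R p\,e^{-ipy}e^{-\frac12 p^2(\sigma^2_{r,s}+\eps)}\,dp\,dr\,ds,
\]
where $\sigma^2_{r,s}=\E[(B^H_s-B^H_r)^2]=|s-r|^{2H}$ by stationarity of increments. Performing the Gaussian $p$-integral gives (after letting $\eps\to 0$, which is justified for $H<2/3$ by Proposition \ref{prop:existence} together with dominated convergence)
\[
\E[\dslt(y)]=-\frac{1}{\sqrt{2\pi}}\int\!\int_\DD \frac{-y}{(|s-r|^{2H})^{3/2}}\exp\!\Big(-\frac{y^2}{2|s-r|^{2H}}\Big)\,dr\,ds
= \frac{1}{\sqrt{2\pi}}\int_0^t (t-u)\,\frac{y}{u^{3H}}\,e^{-y^2/(2u^{2H})}\,du,
\]
using the change of variables $u=s-r$ and integrating out the resulting triangle (the inner length of $\DD$ at separation $u$ is $t-u$). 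Continuity in $y$ for $y\neq 0$ is then immediate from dominated convergence once one checks the integral converges, which it does for all $0<H<2/3$ since near $u=0$ the exponential kills the $u^{-3H}$ singularity and near $u=t$ the integrand is bounded.

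The heart of the matter is the asymptotic analysis of $I(y):=\int_0^t (t-u)\,y\,u^{-3H}\,e^{-y^2/(2u^{2H})}\,du$ as $y\to 0$, and this splits into three regimes according to whether the singularity at $u=0$ is integrable against $u^{-3H}$ alone. I would substitute $u=|y|^{1/H} v$ (so that $y^2/u^{2H}=v^{-2H}$, making the exponential scale-free) to obtain
\[
I(y)=\sgn(y)\,|y|^{1/H}\int_0^{t|y|^{-1/H}}\big(t-|y|^{1/H}v\big)\,v^{-3H}\,e^{-v^{-2H}/2}\,dv.
\]
When $1/3<H<2/3$ we have $3H>1$, so $\int_0^\infty v^{-3H}e^{-v^{-2H}/2}\,dv$ converges (the $v^{-3H}$ blow-up at infinity is harmless after the substitution $w=v^{-2H}$, and near $v=0$ the exponential dominates); the $|y|^{1/H}v$ correction term contributes at higher order, so $I(y)\sim \sgn(y)|y|^{1/H}\cdot t\int_0^\infty v^{-3H}e^{-v^{-2H}/2}\,dv$, and dividing by $|y|^{1/H-2}$ (note $\E[\dslt(y)]=\frac{1}{\sqrt{2\pi}}I(y)$ and $|y|^{1/H}=|y|^2\cdot|y|^{1/H-2}$) gives part (i). In the borderline case $H=1/3$ one has $3H=1$, so $\int^{\,\cdot} v^{-1}\,dv$ diverges logarithmically at the upper limit; here one keeps the full upper cutoff $t|y|^{-1/H}=t|y|^{-3}$ and extracts the $\log$: $\int_0^{t|y|^{-3}} v^{-1}e^{-v^{-2/3}/2}\,dv\sim \log(t|y|^{-3})\sim -3\log|y|$, which after collecting constants and the factor $t$ from the $(t-u)$ term yields part (ii). When $0<H<1/3$ we have $3H<1$, so the integral $\int_0^\infty v^{-3H}e^{-v^{-2H}/2}\,dv$ diverges at infinity polynomially; in this regime the exponential is essentially $1$ over the bulk of the integration range, so one instead works directly with $\int_0^t (t-u)u^{-3H}\,du=\frac{t^{2-3H}}{(1-3H)(2-3H)}\cdot\frac{(2-3H)}{1}$—more carefully $\int_0^t(t-u)u^{-3H}du = t^{2-3H}\big(\tfrac{1}{1-3H}-\tfrac{1}{2-3H}\big)=\tfrac{t^{2-3H}}{(1-3H)(2-3H)}$—wait, this must be reconciled with the stated constant $\tfrac{-t^{2-3H}}{(1-3H)\sqrt{2\pi}}$; the resolution is that for $H<1/3$ one shows $e^{-y^2/(2u^{2H})}\to 1$ and $I(y)/y\to \int_0^t(t-u)u^{-3H}\,du$ by dominated convergence, and then the claimed limit identifies $\int_0^t (t-u)u^{-3H}\,du$ — a routine beta-type integral — with $\tfrac{t^{2-3H}}{(1-3H)(2-3H)}$, so one should double-check the paper's normalization of $\DD$ and of $f_\eps'$; assuming their bookkeeping, the three cases follow from these three scaling behaviors.

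The continuity statement at $y=0$ follows from these asymptotics: in cases (ii) and (iii), $\E[\dslt(y)]\to 0$ as $y\to 0$, matching $\E[\dslt(0)]$ (which is $0$ by the same formula with $y=0$, the integrand vanishing identically), so $\E[\dslt(y)]$ is continuous at $0$ precisely when $H<1/2$; whereas for $1/2\le H<2/3$, part (i)'s exponent $1/H-2\le 0$ shows $|\E[\dslt(y)]|\to\infty$ (or stays bounded away from $0$ at $H=1/2$ where $1/H-2=0$, in which case one checks the constant is nonzero), so continuity fails. The main obstacle I anticipate is not any single step but rather the careful justification of interchanging $\lim_{\eps\to 0}$ with the $(r,s)$-integral and with $\E$ — this needs the $L^2$ existence from Proposition \ref{prop:existence} plus a uniform-integrability or dominated-convergence argument — together with the delicate constant-tracking across the change of variables, especially pinning down the $(t-u)$ weight coming from integrating over the triangle $\DD$ and making sure the three asymptotic regimes glue correctly at the thresholds $H=1/3$ and $H=1/2$.
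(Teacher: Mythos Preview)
Your approach is essentially the paper's: compute $\E[\dslteps(y)]$ via the Fourier representation, pass to $\eps=0$, and rescale $u=|y|^{1/H}v$. The paper keeps the double integral $\int_0^t\int_0^s(\cdot)\,du\,ds$ throughout, whereas you apply Fubini first to get the single integral with weight $(t-u)$; these are equivalent. For cases (i) and (ii) your analysis matches the paper's. For case (iii) the paper uses L'H\^opital while you use dominated convergence directly; your route is cleaner.

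Two small bookkeeping issues. First, you have a sign slip: the Gaussian $p$--integral gives $+\tfrac{y}{\tau^{3/2}}e^{-y^2/(2\tau)}$, so with the minus sign from the definition of $\dslt$ one gets
\[
\E[\dslt(y)]\;=\;-\frac{1}{\sqrt{2\pi}}\int_0^t (t-u)\,\frac{y}{u^{3H}}\,e^{-y^2/(2u^{2H})}\,du,
\]
matching the paper's sign. Second, after $u=|y|^{1/H}v$ the prefactor is $|y|^{1/H-2}$, not $|y|^{1/H}$ (you recover the right power later, but the display is off).

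Your flag on the constant in (iii) is well taken. By dominated convergence (valid since $3H<1$),
\[
\lim_{y\to 0}\frac{\E[\dslt(y)]}{y}=-\frac{1}{\sqrt{2\pi}}\int_0^t(t-u)u^{-3H}\,du
=-\frac{t^{2-3H}}{(1-3H)(2-3H)\sqrt{2\pi}}.
\]
The paper's L'H\^opital computation in \eqref{crack1} differentiates only the term $\int_0^{tM} t\,v^{-3H}e^{-1/(2v^{2H})}dv$ and drops the $-v/M$ contribution; the correct derivative of $\int_0^{tM}(t-v/M)g(v)\,dv$ is $-M^{-2}\int_0^{tM}vg(v)\,dv$ (the boundary terms cancel), and carrying this through produces the extra factor $(2-3H)^{-1}$. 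So the stated constant in (iii) is missing that factor; your computation, not the paper's, is the right one. (For $H=1/3$ the dropped term happens to be of lower order, so the paper's constant in (ii) survives.)
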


The behavior of $\dslt(y)$ as $y \lar 0$ for $1/2 \leq H < 2/3$ is quite interesting. Proposition \ref{exproposition} shows that continuity cannot be expected at $y=0$. On the other hand, in the Brownian case, $H=1/2$, \cite{rosen2005} showed that the renormalization $\aaa'_{t}(y) - t \E[\aaa'_{t}(y)]$ is continuous at $y=0$. This follows readily from the Tanaka formula for $\aaa'_t(y)$ as well (see \cite{markowsky2008proof}). We venture this as a conjecture for $1/2<H<2/3$.

\vski

\noi {\textbf{Conjecture}: \it The renormalization $\dslt(y) - t \E[\dslt(y)]$ has a modification which is continuous in $y$ for $1/2<H<2/3$.}

In the next section, we provide proofs for Proposition \ref{exproposition} and Theorem \ref{bg1}, while in the appendix we discuss the reasoning behind the conjecture, as well as a possible road map for its proof.

\section{Proofs}

\begin{proof}[Proof of Proposition \ref{exproposition}]
Let $\dslteps(y)$ be defined by replacing the delta distribution in \eqref{def:ILT} with $f_\eps$.
By symmetry, $\E[\dslteps(0)]=0$. For $y \neq 0$, using \rrr{klam} we have

\begin{equation} \label{presque}
\begin{split}
\E[\dslteps(y)] & = \frac{-i}{2\pi} \int_{0}^{t} \int_{0}^{s} \int_{\RR} p e^{-ipy} e^{-\eps p^2/2}  \E[e^{ip(B^H_s-B^H_r)}] \,dp\,dr\,ds \\
& = \frac{-i}{2\pi} \int_{0}^{t} \int_{0}^{s} \int_{\RR} p e^{-ipy} e^{-p^2(\eps + (s-r)^{2H})/2} \,dp\,dr\,ds \\
& = \frac{-y}{\sqrt{2\pi}} \int_{0}^{t} \int_{0}^{s} \frac{e^{-{y^2}/[2(\eps + (s-r)^{2H})]}}{(\eps + (s-r)^{2H})^{3/2}} \,dr\,ds.
\end{split}
\end{equation}

For $y\neq 0$ one may, by dominated convergence and convergence in $L^2(\Omega)$, set $\eps=0$ above to obtain $\E[\dslt(y)]$.  It is then clear that $\E[\dslt(y)]$ is continuous at all $y\neq 0$. If $y=0$,
$\E[\dslteps(y)]\equiv 0$, and since $\dslt(y)$ converges in $L^2(\Omega)$, it must be that
$\E[\dslt(y)]=0$. Since \rrr{presque} implies $\E[\dslt(-y)]=-\E[\dslt(y)]$, it suffices to determine the behavior of $\E[\dslt(y)]$ as $y \searrow 0$ .

Suppose $y>0$. Setting $\eps=0$ and replacing $s-r$ by $u$ and then $u$ by $y^{1/H}v$ in \eqref{presque} leads to

\bea \label{crack0}
\E[\dslt(y)] &=& \frac{-y}{\sqrt{2\pi}} \int_{0}^{t} \int_{0}^{s} \frac{e^{-y^2/(2u^{2H})}}{u^{3H}} du ds\\
&=& \frac{-y^{1/H - 2}}{\sqrt{2\pi}} \int_{0}^{t} \int_{0}^{s/y^{1/H}} \frac{e^{-1/(2v^{2H})}}{v^{3H}} dv ds.\nn
\eea
When $H>1/3$, the inner integral converges as $y\searrow 0$, so we get
\bea
\lim_{y\searrow 0}\frac{\E[\dslt(y)] }{y^{1/H - 2}}&=& \frac{-1}{\sqrt{2\pi}} \int_{0}^{t} \int_{0}^{\ff} \frac{e^{-1/(2v^{2H})}}{v^{3H}} dv ds\\
&=& \frac{-t}{\sqrt{2\pi}} \int_{0}^{\ff} \frac{e^{-1/(2v^{2H})}}{v^{3H}} dv\nn
\eea

For the cases $H\le 1/3$, use Fubini on \eqref{crack0} and set $y=M^{-H}$ to get
\bea\label{crack}
\E[\dslt(y)] &=& \frac{-y^{1/H - 2}}{\sqrt{2\pi}} \int_{0}^{t/y^{1/H}} \int_{y^{1/H}v}^{t} \frac{e^{-1/(2v^{2H})}}{v^{3H}} ds dv\\
&=& \frac{-M^{2H - 1}}{\sqrt{2\pi}} \int_{0}^{tM} \int_{v/M}^{t} \frac{e^{-1/(2v^{2H})}}{v^{3H}} ds dv.\nn
\eea
By  L'H\^{o}pital's rule, for $H<1/3$,
\bea\label{crack1}
\lim_{y\searrow 0}\frac{\E[\dslt(y)]}{y} &=& \lim_{M\to\ff}\frac{-\int_{0}^{tM} \int_{v/M}^{t} \frac{e^{-1/(2v^{2H})}}{v^{3H}} ds dv}{M^{1-3H}\sqrt{2\pi} }\\
&=& \lim_{M\to\ff}\frac{-\frac{d}{dM}\int_{0}^{tM} t \frac{e^{-1/(2v^{2H})}}{v^{3H}} dv}{(1-3H)M^{-3H}\sqrt{2\pi} }\nn\\
&=& \lim_{M\to\ff}\frac{-t^{2-3H} e^{-1/(2(tM)^{2H})}}{{(1-3H)\sqrt{2\pi}}} = \frac{-t^{2-3H}}{(1-3H)\sqrt{2\pi}}\nn.
\eea
Similarly, for $H=1/3$,
\bea\label{crack2}
\lim_{y\searrow 0}\frac{\E[\dslt(y)]}{y \log(y)} &=& \lim_{M\to\ff}\frac{3t^2\frac{e^{-1/(2(tM)^{2/3})}}{tM}}{\frac{d}{dM}\log(M)\sqrt{2\pi} } = \frac{3t}{\sqrt{2\pi}}.
\eea
\end{proof}

\begin{proof}[Proof of Theorem \ref{bg1}]
In the following proof, $C$ denotes a constant which may change from line to line, and $t$ will always be bounded by a fixed $T>0$.

We first consider the results about $\dslt$ for $H<1/2$. For the existence of this process a.s. and  in $L^p(\Omega)$ for all $p\in(0,\ff)$,
by Kolmogorov's continuity criterion \cite[Thm I.2.1]{revuz1999}, it suffices to
 show
\begin{equation} \label{rundmc0}
\E[|\dslteps(y)- \dsltepse(y)|^n] \leq C|\eps-\tilde{\eps}|^{n\la} \ \ \text{ for any } \lambda\in[0,1].
\end{equation}
Using \rrr{klam} we have for any  $\lambda\in [0,1]$
\begin{eqnarray}
\label{try}
&&\E[|\dslteps(y)-\dsltepse({y})|^n] \\
&=& \nn\frac{1}{(2\pi)^n} \Big|\int_{\DD^n} \int_{\RR^n} \E[\prod_{k=1}^n e^{ip_k(B^H_{s_k}-B^H_{r_k})}] \prod_{k=1}^n p_k (e^{-\eps p_k^2/2}-e^{-\eps p_k^2/2})e^{ip_ky} d\vec{p}d\vec{r}d\vec{s} \Big|\\ \nn
&\le& C |\eps-\tilde{\eps}|^{n\la} \int_{\DD^n} \int_{\RR^n} \Big|\E[\prod_{k=1}^n e^{ip_k(B^H_{s_k}-B^H_{r_k})}]\Big|  \prod_{k=1}^n|p_k|^{1+2\la}  d\vec{p}d\vec{r}d\vec{s} .
\end{eqnarray}
where in the inequality we have used the bound
\begin{eqnarray} \label{bnd77} 
|e^{-\eps p^2/2} - e^{-\tilde{\eps} p^2/2}| \leq C |p|^{2\la} |\eps-\tilde{\eps}|^{\la} \ \ \text{ for any } \lambda\in[0,1].
\end{eqnarray}
We need only show now that the the integral on the right side of \eqref{try} converges. To do this we estimate the two products in the integrand.

Let us first consider the product inside the expectation.
This expectation in the integrand will take different forms over different regions of integration, depending on the ordering of the $r_k$'s and $s_k$'s. Fix such an ordering and let $\ell_1 \leq \ell_2 \leq \ldots \leq \ell_{2n}$ be a relabeling of the set $\{r_1,s_1,r_2,s_2, \ldots, r_n,s_n\}$. We may then write

\begin{equation} \label{kidr}
\prod_{k=1}^n e^{ip_k(B^H_{s_k}-B^H_{r_k})} = \prod_{j=1}^{2n-1} e^{iu_j(B^H_{\ell_{j+1}}-B^H_{\ell_{j}})},
\end{equation}
where the $u_j$'s are properly chosen linearly combinations of the $p_k$'s to make \rrr{kidr} an equality. A visual device which may aid, due to Rosen (personal communication), is as follows. We draw an arc corresponding to each $p_k$ and whose endpoints are in the correct order. The intervals between endpoints belong to
$u_j$'s, and each $u_j$ is the linear combination of the $p_i$'s which arch over it. As an example, let us suppose $n=6$ and $r_1<r_2<s_2<r_3<r_4<s_1<s_3<r_5<s_4<s_5<r_6<s_6$. The
picture associated to this configuration is

\vspace{-1 in}
\hspace{-1cm}
\includegraphics[width=135mm, height=105mm]{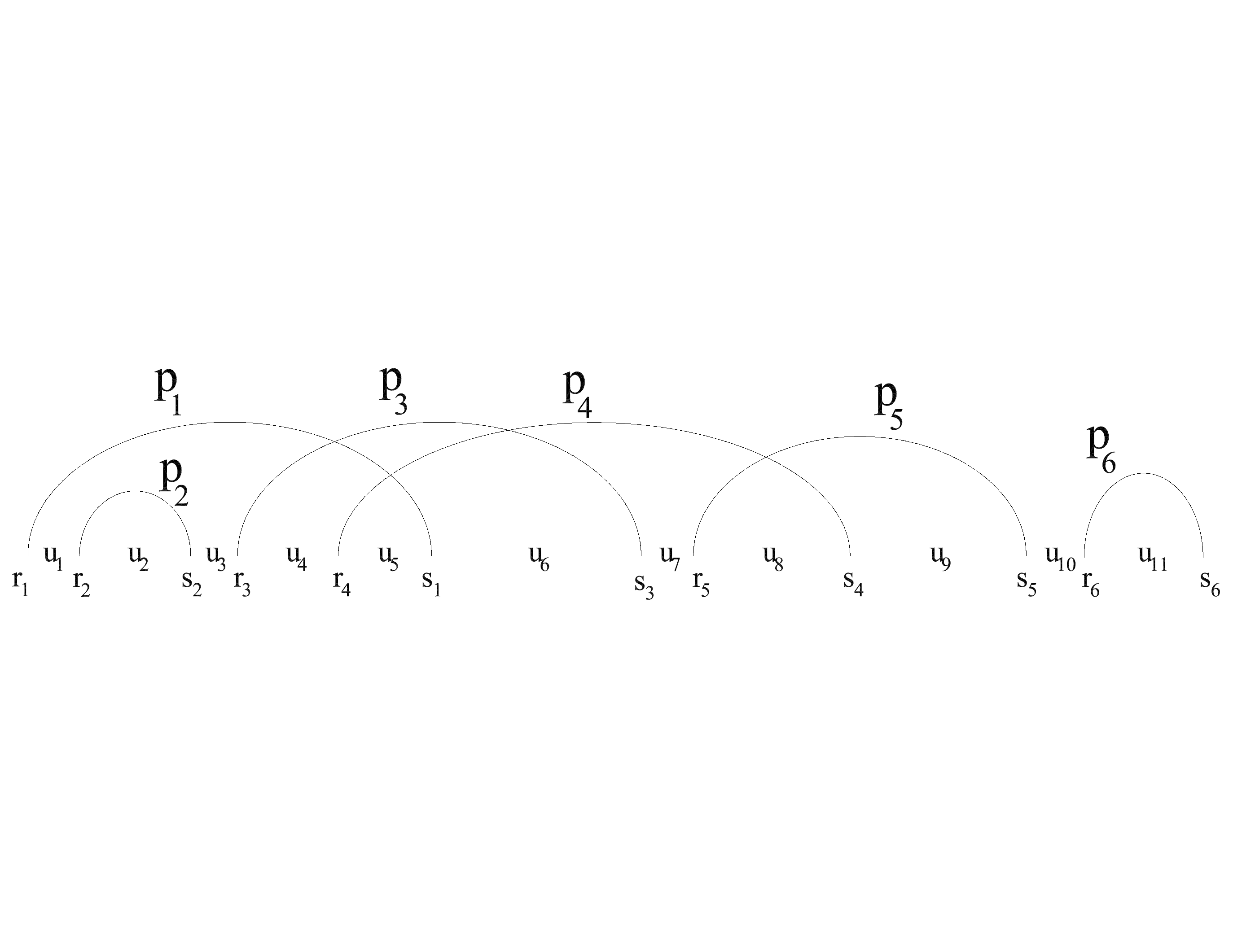}

\vspace{-1.5 in}
\begin{center}Figure 1\end{center}

From this picture we see easily that $u_1 = p_1$, $u_2 = p_1+p_2$,
$u_3 = p_1$, $u_4 = p_1 + p_3$, $u_5 = p_1+p_3+p_4$, $u_6 = p_3
+ p_4$, $u_7 = p_4$, $u_8=p_4+p_5$, $u_9 = p_5$, $u_{10} = 0$, and $u_{11}=p_6$. Written in this form we may appeal to the local nondeterminism of FBM (see for instance \cite[Lemma 8.1]{berman1974} or \cite[Eq. 2.3]{rosen1987}) to conclude that
\begin{equation} \label{van}
\E|\prod_{k=1}^n e^{ip_k(B^H_{s_k}-B^H_{r_k})}| \leq e^{-C \sum_{j=1}^{2n-1} |u_j|^2 (\ell_{j+1}-\ell_j)^{2H}}.
\end{equation}

Let us now bound the second product in the integrand of \eqref{try}.
Note that there exist $j_1, j_2$ such that  $p_k=u_{j_1} - u_{j_1-1}$ and $p_k=u_{j_2-1} - u_{j_2}$ for appropriate choices of $j_1, j_2$ (where we set $u_0 =u_{2n} = 0$). Thus
\begin{equation} \label{steam}
\begin{split}
|p_k|^{1+2\la} & = |u_{j_1} - u_{j_1-1}|^{\frac{1+2\la}{2}} |u_{j_2-1} - u_{j_2}|^{\frac{1+2\la}{2}} \\
& \leq C (|u_{j_1}|^{\frac{1+2\la}{2}}+|u_{j_1-1}|^{\frac{1+2\la}{2}})(|u_{j_2}|^{\frac{1+2\la}{2}}+|u_{j_2-1}|^{\frac{1+2\la}{2}}).
\end{split}
\end{equation}

Setting $a_j=\ell_{j+1}-\ell_j$ (with $\ell_0=0$) and using \rrr{van} and \rrr{steam} we can now bound the integral on the right side of \eqref{try} by
\bea\label{tryq}
&&C\int_{\RR^n} \int_{[0,t]^{2n}}  \prod_{j=1}^{2n-1} e^{-C |u_j|^2 a_j^{2H}} \prod_{j=0}^{2n}  (|u_{j}|^{\frac{1+2\la}{2}}+|u_{j-1}|^{\frac{1+2\la}{2}}) d\vec{a} d\vec{p} \nn \\
&\leq& C\int_{\RR^n} \frac{\prod_{j=0}^{2n}  (|u_{j}|^{\frac{1+2\la}{2}}+|u_{j-1}|^{\frac{1+2\la}{2}})}{\prod_{j=1}^{2n-1}(1+|u_j|^{1/H})}d\vec{p},
\eea
where we have used the simple bound 
\begin{equation} \label{clay}
\int_{0}^{t} e^{-C |u|^2 a^{2H}} da \leq \frac{C}{1+|u|^{1/H}}.
\end{equation}
Expanding the product in the numerator of  \eqref{tryq} gives us the sum of a number of terms of the form $$\prod_{j=1}^{2n-1} |u_{j}|^{\frac{(1+2\la)m_j}{2}}$$ where $m_j = 0,1,$ or $2$ (terms containing $|u_0|$ or $|u_{2n}|$ are equal to $0$). We may therefore reduce our problem to showing that

\begin{equation} \label{gre}
\int_{\RR^n} \frac{d\vec{p}}{\prod_{j=1}^{2n-1}(1+|u_j|^{\frac{1}{H}-\frac{(1+2\la)m_j}{2}})}
\end{equation}
is finite. We perform a linear transformation changing this into an integral with respect to variables $u_{k_1}, u_{k_2}, \ldots, u_{k_n} \in \{u_1, \ldots, u_{2n-1}\}$ which span $\{p_1, \ldots, p_n\}$ in order to bound \rrr{gre} by

\begin{equation} \label{ek}
\begin{split}
C& \int_{\RR^n} \frac{d\vec{u}}{\prod_{j=1}^{2n-1}(1+|u_{k_j}|^{\frac{1}{H}-\frac{(1+2\la)m_j}{2}})} \\
& \leq C\int_{\RR^n} \frac{d\vec{u}}{\prod_{j=1}^{2n-1}(1+|u_{k_j}|^{\frac{1}{H}-(1+2\la)})}.
\end{split}
\end{equation}
This is finite if we choose $\la$ so that $\frac{1}{H}-(1+2\la) > 1$.

Let us next prove the H\"older continuity results for $\dslt$.  This is established by once again using  Kolmogorov's continuity criterion.  Since
$(a+b)^n$ and $a^n+b^n$ are equivalent up to constants for $a,b>0$,
it suffices to check that for $\la<\min(1/H-2,1)$ and $\bb<1-2H$,
\begin{equation} \label{rundmc}
\begin{gathered}
\E[|\dslt(y)- \dslt(\tilde{y})|^n] \leq C|y-\tilde{y}|^{n\la} \  \text{ and} \\
\E[|\dslt(y)- \hat\aa_t'(y)|^n] \leq C|t-\tilde{t}|^{n\bb}.
\end{gathered}
\end{equation}

Following a similar calculation to \rrr{try} at $\eps=0$ (this is possible since we have established convergence in $L^p(\Omega)$ for all $p\in(0,\ff)$),
we use \rrr{klam} to get that for any  $\lambda\in [0,1]$
\begin{eqnarray}
\label{try200}
&&\E[|\dslt(y)-\dslt(\tilde{y})|^n] \\
&\le& C |y-\tilde{y}|^{n\la} \int_{\DD^n} \int_{\RR^n} \Big|\E[\prod_{k=1}^n e^{ip_k(B^H_{s_k}-B^H_{r_k})}]\Big|  \prod_{k=1}^n|p_k|^{1+\la}  d\vec{p}d\vec{r}d\vec{s} .\nn
\end{eqnarray}
where we have used the bound
\begin{equation} \label{bnd76}
|e^{ip_ky}-e^{ip_k\tilde{y}}| \leq C|p_k|^\la|y-\tilde{y}|^\la\ \ \text{ for any } \lambda\in [0,1].
\end{equation}
Following \eqref{van}-\eqref{ek} (with $\lambda$ instead of $2\la$), we establish the $y$-variation in \eqref{rundmc}.

For the $t$-variation, set $\tilde\DD=\{(r,s): 0 < r < s < \tilde t\}$ and assume $\tilde{t} > t$. Once again using \eqref{klam} and  \eqref{van}-\eqref{ek} (with $\lambda=0$ now), we get
\begin{eqnarray} \label{try100}
&&\E[|\hat{\aa}_{\tilde t}'(y)-\dslt(y)|^n] \\\nn&=& \frac{1}{(2\pi)^n} \Big| \int\!\int_{(\tilde{\DD}\backslash\DD)^n} \int_{\RR^n} \E[\prod_{k=1}^n e^{ip_k(B_{s_k}-B_{r_k})}] \prod_{k=1}^n p_k e^{ip_ky} d\vec{p}d\vec{r}d\vec{s} \Big| \\ \nn
&=& \frac{1}{(2\pi)^n} \Big|\int_{[t,\tilde{t}]^n} \int_{[0,s_1] \times \ldots [0,s_n]} \int_{\RR^n} \E[\prod_{k=1}^n e^{ip_k(B_{s_k}-B_{r_k})}] \prod_{k=1}^n p_k e^{ip_ky} d\vec{p}d\vec{r}d\vec{s} \Big|\\
&\leq& C\nn \int_{\RR^n} \prod_{j=0}^{2n}  (|u_{j}|^{\frac{1}{2}}+|u_{j-1}|^{\frac{1}{2}})  \int_{[0,\tilde t]^{2n}}  \prod_{j=1}^{2n-1} e^{-C |u_j|^2 a_j^{2H}} \prod_{k=1}^{n} 1_{[t,\tilde{t}]}(s_k) d\vec{r} d\vec{s} d\vec{p}.
\end{eqnarray}
We apply H\"older's inequality with $\ga = 1 - \bb$ to get a bound on the right-most integral:
\begin{equation} \label{try25}
\begin{split}
\int_{[0,\tilde t]^{2n}}&  \prod_{j=1}^{2n-1} e^{-C |u_j|^2 a_j^{2H}} \prod_{k=1}^{n}1_{[t,\tilde{t}]}(s_k) d\vec{r}d\vec{s} \\
& \leq \Big(\int_{[0,\tilde t]^{2n}}  \prod_{j=1}^{2n-1} e^{-C |u_j|^2 a_j^{2H}}\prod_{k=1}^{n}d\vec{r}d\vec{s}\Big)^\ga \Big(\int_{[0,t]^{2n}}  \prod_{k=1}^{n} 1_{[t,\tilde{t}]}(s_k) d\vec{r}d\vec{s}\Big)^\bb \\
& \qquad \qquad \quad \leq C |t-\tilde{t}|^{n\bb} \prod_{j=1}^{2n-1}  \frac{1}{(1+|u_j|^{1/H})^\ga}
\end{split}
\end{equation}
which gives us
\begin{equation} \label{try3}
\begin{split}
\E[|&\dslteps(y)-\dsltepst(y)|^n] \leq \\ & C |t-\tilde{t}|^{n\bb} \int_{\RR^n} \frac{\prod_{j=0}^{2n}  (|u_{j}|^{\frac{1}{2}}+|u_{j-1}|^{\frac{1}{2}})}{\prod_{j=1}^{2n-1}  (1+|u_j|^{1/H})^\ga}  d\vec{p}.
\end{split}
\end{equation}
Following steps \rrr{gre} and \rrr{ek} bounds the above by
\begin{equation} \label{ek2}
\begin{split}
C& \int_{\RR^n} \frac{d\vec{u}}{\prod_{j=1}^{2n-1}(1+|u_{k_j}|^{\frac{\ga}{H}-\frac{m_j}{2}})} \\
& \leq C\int_{\RR^n} \frac{d\vec{u}}{\prod_{j=1}^{2n-1}(1+|u_{k_j}|^{\frac{\ga}{H}-1})}.
\end{split}
\end{equation}
Choosing $\bb < 1 - 2H$ implies $\ga = 1-\bb > 2H$ which in turn implies convergence of the above integral. This establishes \rrr{rundmc} and completes the proof of H\"older continuity for $\dslt$.

We finish our proof of the part of the theorem concerning $\dslt$ by showing \eqref{occtim} (saving $\frac{d}{dy}\aa_t = \dslt$ for later). Following the arguments of \cite{rosen2005}, the first observation is that
\eqref{rundmc} is valid, not only for $\dslt$, but for $\dslteps$ as well.  Thus, by the H\"older continuity in $(\eps,y,t)$ implied by Kolmogorov's continuity criterion, the convergence
of $\dslteps$ as $\eps\to 0$ is locally uniform in $y$ which  implies
 $L^1_\text{loc}$-convergence as well. This allows the following manipulations for any $g\in C^1$ with compact support:
\begin{equation} \label{wilhor}
\begin{split}
\int_{\RR} g(y) \dslt(y) dy & = \lim_{\eps \lar 0} \int_{\RR} g(y) \dslteps(y)\, dy \\
& =-\lim_{\eps \lar 0} \int_{\RR} g(y) \Big( \int \! \int_{\DD}f_\eps'(B^H_s-B^H_r - y)\,dr\,ds \Big)\, dy \\
& = -\lim_{\eps \lar 0} \int \! \int_{\DD} \int_{\RR} g'(y)f_\eps(B^H_s-B^H_r - y) \,dy\,dr\,ds \\
& = -\lim_{\eps \lar 0} \int \! \int_{\DD} f_\eps * g'(B^H_s-B^H_r)\, dr\,ds \\
& = -\int \! \int_{\DD} g'(B^H_s-B^H_r) \,dr\,ds,
\end{split}
\end{equation}
where the negative sign from the integration by parts  in the third identity cancels the negative sign of the chain rule in the $y$ derivative of $f_\eps$. The result is extended to all $g\in C^1$ by noting that $B^H_s$ is a.s. bounded on $[0,t]$ hence $\dslt$ has a.s. compact support.

The H\"older continuity arguments for $\dslt(y)$ apply to $\aa_t(y)$ with the only difference between the two proofs being the presence of the $\prod_{k=1}^n p_k$ term in the integrand. Following steps \rrr{try} through \rrr{ek} leads to
\begin{equation} \label{ekek}
\E[|\aa_{\eps, t}(y)-\aa_{\eps, t}(\tilde{y})|^n] \leq C|y-\tilde{y}|^{n\la} \int_{\RR^n} \frac{\prod_{j=1}^{n}du_{k_j}}{\prod_{j=1}^{2n-1}(1+|u_{k_j}|^{\frac{1}{H}-\la})}.
\end{equation}
This integral is finite, since $\frac{1}{H}-\la > 1$. The variation in $\eps$ is handled in the same way. Kolmogorov's criterion applies as before. As mentioned in the introduction, H\"older continuity in $t$ for $\aa_t(y)$ follows from \eqref{1.2} and \cite{xiao1997holder}.

%
%

The occupation times formula \rrr{oct1} is derived similarly to \rrr{wilhor}, minus the application of integration by parts. Finally, we apply the locally uniform convergence to $\frac{d}{dy}\aa_{t,\eps} = \dslteps$ to obtain
\be
\aa_t(y) = \aa_t(x) +\int_x^y\dslt(u) du
\ee
which gives $\frac{d}{dy}\aa_t = \dslt$.
\end{proof}

\section*{Appendix: Continuity for $A\subset\DD$ when $H>1/2$}

In the introduction we conjectured that
\be\label{renormalzation}
\dslt(y) - t \E[\dslt(y)]
\ee has a spatially continuous modification for $1/2<H<2/3$. Rosen \cite{rosen2005} proved this in the case $H=1/2$ by considering
a generalization of \eqref{bsl3} to subsets $A\subset\DD$.  More specifically, define
\be \label{bsl100}\dsltA := -\lim_{\eps \lar 0} \int\!\int_A
f_\eps'(B^H_s-B^H_r-y) \,dr \,ds,
\ee
if the limit exists. We will begin by proving the following, which shows that $\dsltA$ exists when $A$ is sufficiently bounded away from the diagonal set $\{0<r=s<t\}$.

\begin{proposition} \label{bg3} The following hold.

\begin{itemize}
\item[(i)] If $A \subseteq \DD_\kappa := \{0 < r < s-\kappa < t-\kappa \}$, then $\aaa'_t(y,A)$ exists and is jointly H\"older continuous in $y$ and $t$.

\item[(ii)] If $A \subseteq A_1^1 := [0,1/2] \times [1/2,1]$, then $\aaa'_t(y,A)$ exists and is jointly H\"older continuous in $y$ and $t$.
\end{itemize}

\vski

\noi In both cases, $\aaa'_t(y,A)$ can be taken to be H\"older continuous of any order $\la < \frac{1}{H} - \frac{3}{2}$ in $y$ and of any order $\bb < 1- \frac{3}{2}H$ in $t$, and we also have, for any $g\in C^1$,
\begin{eqnarray}
\int \! \int_A g'(B^H_s -B^H_r)\, dr\, ds = -\int_{\RR} g(y) \dslt(y,A) dy. \label{occtim}
\end{eqnarray}

\end{proposition}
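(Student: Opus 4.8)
The plan is to follow the template already established in the proof of Theorem~\ref{bg1}, with the key observation that restricting the domain of integration to a set $A$ that stays away from the diagonal $\{0<r=s<t\}$ \emph{improves} the convergence, since the problematic small-scale behavior of $\alpha_t'$ comes precisely from the region where $s-r$ is small. Concretely, I would first reduce, as before, to verifying a Kolmogorov-type estimate of the form
\begin{equation}
\begin{gathered}
\E[|\dslteps(y,A) - \dsltepse(y,A)|^n] \leq C|\eps-\tilde\eps|^{n\la},\quad
\E[|\dslt(y,A) - \dslt(\tilde y,A)|^n] \leq C|y-\tilde y|^{n\la},\\
\E[|\hat\alpha_{\tilde t}'(y,A) - \dslt(y,A)|^n] \leq C|t-\tilde t|^{n\bb},
\end{gathered}
\end{equation}
for $\la < \frac{1}{H}-\frac{3}{2}$ and $\bb < 1-\frac{3}{2}H$. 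Expanding the $n$-th moment with the Fourier representation \eqref{klam}, invoking local nondeterminism exactly as in \eqref{van}, and carrying out the arc decomposition \eqref{kidr}--\eqref{steam}, the whole argument again collapses to a finiteness question for an integral of the shape \eqref{gre}. The difference is bookkeeping: when the spacing variables $a_j = \ell_{j+1}-\ell_j$ are constrained to be bounded below on at least one of the relevant intervals (because $A$ lies in $\DD_\kappa$, resp.\ in $A_1^1$), the corresponding factor $\int e^{-C|u_j|^2 a_j^{2H}}\,da_j$ is bounded by $C/(1+|u_j|^{2})$ or better (an integral over $[\kappa,t]$ rather than $[0,t]$ decays in $u_j$ like a Gaussian tail, giving \emph{any} polynomial power we like), which is what buys the extra room needed to push $\la$ and $\bb$ up to the stated thresholds $\frac{1}{H}-\frac{3}{2}$ and $1-\frac{3}{2}H$ rather than $\frac{1}{H}-2$ and $1-2H$.

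For part (i), the set $\DD_\kappa = \{0<r<s-\kappa<t-\kappa\}$ forces $s_k - r_k \geq \kappa$ for every $k$, so for each $k$ the arc for $p_k$ spans a total length of at least $\kappa$; hence in the relabeled picture, the sum of the $a_j$'s over the intervals lying under that arc is at least $\kappa$, and at least one such $a_j$ is $\geq \kappa/(2n)$. This lets me replace one factor $1/(1+|u_j|^{1/H})$ per block by a rapidly decaying factor, and the remaining degrees of freedom then integrate to something finite under the weaker condition $\frac{1}{H}-\frac{3}{2} > 1$ on $\la$ (after the numerator-expansion into terms $\prod |u_j|^{(1+2\la)m_j/2}$ with $m_j\in\{0,1,2\}$) and $\ga/H - \frac{3}{2} > 1$, i.e.\ $\ga > \frac{5}{2}H$... actually $\ga > \frac{3}{2}H$, equivalently $\bb < 1-\frac{3}{2}H$, for the $t$-variation. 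For part (ii), the set $A_1^1 = [0,1/2]\times[1/2,1]$ has $r_k \in [0,1/2]$ and $s_k\in[1/2,1]$ for all $k$, which again separates all the $r$'s from all the $s$'s by a fixed gap and lets one control the "middle" spacing similarly; the arc structure is in fact simpler because the ordering is partially fixed. In both cases the occupation-time formula \eqref{occtim} for $A$ follows by repeating the chain of manipulations in \eqref{wilhor}: once the Kolmogorov estimates hold uniformly in $\eps$, the approximants $\dsltA$ converge locally uniformly (hence in $L^1_{\mathrm{loc}}$) in $y$, so one may interchange the $\eps$-limit with the $\int_\RR g(y)\,(\cdot)\,dy$ integral, integrate by parts in $y$ moving the derivative from $f_\eps'$ onto $g$, recognize $f_\eps * g'$, and let $\eps\to 0$.

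I expect the main obstacle to be the combinatorial/geometric argument that, for \emph{every} ordering of the $2n$ time points compatible with membership in $A$, one can identify enough "long" spacing intervals $a_j$ to gain the required decay in the $u_j$'s — and to do so in a way that meshes with the numerator bound \eqref{steam}, where each $p_k$ is written as a difference of two consecutive $u_j$'s. The subtlety is that a single $u_j$ may be forced to carry decay for several different $p_k$'s at once (it sits under several arcs), so one must check that the exponents still balance: the power of $|u_{k_j}|$ in the denominator after the linear change of variables must exceed $1$ for each spanning variable. A clean way to organize this is to argue that the constraint defining $A$ guarantees a fixed fraction of the total length $t$ lies in intervals whose endpoints are "$r$-type on the left, $s$-type on the right" (or simply bounded away from $0$), fix one such interval per independent direction, and bound its contribution by $C_N (1+|u_j|)^{-N}$ for $N$ as large as needed, absorbing all numerator powers; the remaining integral is then handled verbatim by the estimates \eqref{gre}--\eqref{ek} of Theorem~\ref{bg1}. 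The rest — convergence in $L^p(\Omega)$, existence of the modification, and the a.s.\ compact support needed to extend \eqref{occtim} from compactly supported $g\in C^1$ to all $g\in C^1$ — is routine and identical to the body of the paper.
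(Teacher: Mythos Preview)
Your overall framework (Kolmogorov criterion, Fourier expansion, local nondeterminism, arc picture) is right, but the combinatorial core of your argument has a real gap, and the fallback you propose does not close it.

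The claim ``for each $k$, some $a_j$ under the arc of $p_k$ is at least $\kappa/(2n)$, so replace one factor $1/(1+|u_j|^{1/H})$ per $k$ by arbitrary polynomial decay'' does not give $n$ independent directions of fast decay: the same long subinterval can sit under many arcs at once. Already for $n=2$ with ordering $r_1<r_2<s_1<s_2$, the constraints $a_1+a_2\geq\kappa$ and $a_2+a_3\geq\kappa$ are both satisfied on the region where $a_2\geq\kappa$ and $a_1,a_3$ are tiny; you then get Gaussian decay only in $u_2=p_1+p_2$, a single direction, and the remaining $p_1$-integral still has to be controlled by $(1+|u_1|^{1/H})^{-1}(1+|u_3|^{1/H})^{-1}$ against the numerator $|p_1|^{1+\la}|p_2|^{1+\la}$. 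Your plan to handle that residual ``verbatim by the estimates \eqref{gre}--\eqref{ek}'' cannot work here: those estimates require $\tfrac{1}{H}-(1+\la)>1$, i.e.\ $\la<\tfrac{1}{H}-2$, which is vacuous for $H\geq 1/2$, precisely the regime this proposition is about.

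What the paper actually does for (i) is split configurations into those with and without \emph{isolated intervals}. With no isolated intervals it never uses the $\kappa$-gap at all; instead it builds two spanning sets $\AA,\BB\subset\{u_1,\dots,u_{2n-1}\}$ from the increasing and decreasing $u_j$'s (Lemma~\ref{linal}) with the property that any $u_j\in\AA\cap\BB$ has $m_j\leq 1$, and then applies Cauchy--Schwarz. This is what produces the exponent $d(H,\la)=\min\bigl(\tfrac{1}{H}-\tfrac{1+\la}{2},\,2(\tfrac{1}{H}-(1+\la))\bigr)$ and hence the threshold $\la<\tfrac{1}{H}-\tfrac{3}{2}$. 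The $\kappa$-separation is used only for isolated intervals, where the single $u_j$ under that arc genuinely has $a_j=s_k-r_k\geq\kappa$, giving $e^{-C\kappa^{2H}|u_j|^2}$; one integrates that variable out and iterates on the reduced configuration. For (ii) the mechanism is different again: one inserts $B^H_{1/2}$, writes $\prod_k e^{ip_k(B^H_{s_k}-B^H_{r_k})}$ as a product of an ``$r$-side'' and an ``$s$-side'' chain, and applies Cauchy--Schwarz to the two resulting $n$-fold products; this yields the same $\la<\tfrac{1}{H}-\tfrac{3}{2}$ without any isolated-interval analysis. Your sketch contains neither of these two devices, and without one of them the stated H\"older exponents are out of reach.
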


\begin{proof}[Proof of Proposition \ref{bg3}(i)] As before, we will show that \rrr{rundmc} holds. Again we begin with

\begin{equation} \label{try7}
\begin{split}
E&[|\aa_{\eps, t}'(y,A)-\aa_{\eps, t}'(y',A)|^n] = \\ & \frac{1}{(2\pi)^n} \Big|\int_{A^n} \int_{\RR^n} E[\prod_{k=1}^n e^{ip_k(B_{s_k}-B_{r_k})}] \prod_{k=1}^n p_k (e^{ip_ky}-e^{ip_ky'}) e^{-\eps p_k^2/2} dp_kdr_kds_k \Big|.
\end{split}
\end{equation}
The expectation in the integrand depends again upon the ordering of the $s_k$'s and $r_k$'s, but unlike in the earlier results we will need to do a careful analysis of different possible orderings in order to obtain the required convergence. Of particular interest will be configurations which contain {\it isolated intervals}, that is, values $k'$ and their corresponding variables $p_{k'}$ for which the interval $[r_{k'},s_{k'}]$ contains no other $r_k$ or $s_k$. For instance, in Figure 1 above $p_2$ and $p_6$ correspond to isolated intervals, while no others do. As with a number of other cases in which this general method has been applied, configurations with isolated intervals present special difficulties(for examples, see \cite{markowsky2008renormalization} and \cite{rosen1988}). We will therefore begin by supposing that we have a configuration of $\{r_1,s_1, \ldots ,r_n,s_n\}$ which contains no isolated intervals. We may follow the argument for the case $H<1/2$ in order to reduce our problem again to showing

\begin{equation} \label{try8}
C \int_{\RR^n} \frac{\prod_{k=1}^{n}dp_k}{\prod_{j=1}^{2n-1}(1+|u_j|^{\frac{1}{H}-\frac{(1+\la)m_j}{2}})} < \ff
\end{equation}
It is clear that if $\TT=\cup_{k=1}^n (r_k,s_k)$ is not connected then the integral factors into the product over the different components, so we may assume that $\TT$ is connected. Suppose that we can find two sets $\AA, \BB \subseteq \{u_1, \ldots , u_{2n-1}\}$ with the properties that $\AA$ and $\BB$ both span $\{p_1, \ldots , p_n\}$ and if $u_j \in \AA \cap \BB$ then $m_j \leq 1$. We can then use the Cauchy-Schwarz inequality to bound \rrr{try8} by

\begin{equation} \label{page}
\begin{split}
\Big(\int_{\RR^n} & \frac{\prod_{k=1}^{n}dp_k}{\prod_{j=1}^{2n-1}(1+|u_j|^{\frac{1}{H}-\frac{(1+\la)m_j}{2}})}\Big)^2 \leq \\
& \int_{\RR^n} \frac{\prod_{k=1}^{n}dp_k}{\prod_{u_j \in \AA \cap \BB}(1+|u_j|^{\frac{1}{H}-\frac{(1+\la)m_j}{2}}) \prod_{u_j \in \AA \backslash \BB}(1+|u_j|^{\frac{1}{H}-\frac{(1+\la)m_j}{2}})^2} \\
& \times \int_{\RR^n} \frac{\prod_{k=1}^{n}dp_k}{\prod_{u_j \in \AA \cap \BB}(1+|u_j|^{\frac{1}{H}-\frac{(1+\la)m_j}{2}}) \prod_{u_j \in \BB \backslash \AA}(1+|u_j|^{\frac{1}{H}-\frac{(1+\la)m_j}{2}})^2} \\
& \leq C \int_{\RR^n} \frac{\prod_{u_j \in \AA}du_j}{\prod_{u_j \in \AA}(1+|u_j|^{d(H,\la)})} \int_{\RR^n} \frac{\prod_{u_j \in \BB}du_j}{\prod_{u_j \in \BB}(1+|u_j|^{d(H,\la)})},
\end{split}
\end{equation}
where $d(H,\la) = \min(\frac{1}{H}-\frac{(1+\la)}{2}, 2(\frac{1}{H}-(1+\la)))$. The assumption that $\la < \frac{1}{H} - \frac{3}{2}$ implies that $d(H,\la)>1$, so the final expression in \rrr{page} is finite. We need therefore only show that we can always find the sets $\AA, \BB$ with the necessary properties. To do this, we will need to introduce a bit of terminology which was utilized in \cite{markowsky2008renormalization}. For each $j$, either $u_{j} - u_{j-1} = p_k$ or $u_{j} - u_{j-1} = -p_k$ for some $k$. In the first case we will refer to $u_j$ as
{\it increasing} and in the second case we will say that $u_j$ is {\it decreasing}. We will call a variable $p_{k'}$ {\it s-free}
if there is no $s_k$ contained in $(r_{k'},s_{k'})$, and similarly $p_{k'}$ is {\it r-free}
if there is no $r_k$ contained in $(r_{k'},s_{k'})$. For example, given the configuration indicated in Figure 1, the variables $u_1, u_2, u_3,$ and $u_6$ are all increasing, while $u_4, u_5,$ and $u_7$ are decreasing. $p_1$ is the only s-free variable, and $p_4$ is the only r-free variable. The following lemma was stated in \cite{rosen1988}; a simple proof using the terminology in this paper can be found in \cite{markowsky2008renormalization}.

\begin{lemma} \label{linal}
\begin{itemize} \label{}

\item[(i)] The span of the increasing $u_j$'s is equal to the span of the set of
all $p_k$'s which are not r-free. Furthermore, suppose that for each
r-free $p_k$ we choose $u(p_k)$ to be any one of the decreasing
$u_j$'s which contains $p_k$ as a term. Then, if we let $\AA = \{$set
of increasing $u_j$'s$\} \unn \{$set of all $u(p_k)$'s$\}$, $\AA$
spans the entire set $\{ p_1, ..., p_n \}$.

\item[(ii)] The span of the decreasing $u_j$'s is equal to the span of the set of
all $p_k$'s which are not s-free. Furthermore, suppose that for each
s-free $p_k$ we choose $u(p_k)$ to be any one of the increasing
$u_j$'s which contains $p_k$ as a term. Then, if we let $\BB = \{$set
of decreasing $u_j$'s$\} \unn \{$set of all $u(p_k)$'s$\}$, $\BB$
spans the entire set $\{ p_1, ..., p_n \}$.

\end{itemize}
\end{lemma}
In fact, only $(ii)$ was proved in \cite{markowsky2008renormalization}, but $(i)$ follows by symmetric arguments. This lemma allows us to form the sets $\AA$ and $\BB$ which have the desired properties, provided that we can always choose the $u(p_k)$'s for the r-free and s-free variables to be equal to $u_j$'s for which $m_j \leq 1$. However, this is certainly possible, since each r-free and s-free variable appears as a term in at least two consecutive $u_j$'s(it is here that we use the assumption of no isolated intervals), and the definition of the $m_j$'s show that it is impossible to have $m_j=m_{j+1}=2$. This gives us the required control over in the variation in $y$. The same technique applies to control the variation in $\eps$. For the variation in $t$ we follow steps \rrr{try100} to \rrr{ek2}. We must therefore show that

\begin{equation} \label{ek3}
C \int_{\RR^n} \frac{\prod_{j=1}^{n}du_{k_j}}{\prod_{j=1}^{2n-1}(1+|u_{k_j}|^{\frac{\ga}{H}-\frac{m_j}{2}})}
\end{equation}
is finite, where $\ga = 1 - \bb$. As before we apply Lemma \ref{linal} to form the sets $\AA$ and $\BB$ and follow the manipulations in \rrr{page} in order to show that \rrr{ek3} is finite. This requires $\min(\frac{\ga}{H}-\frac{1}{2}, 2(\frac{\ga}{H}-1))$ to be at least $1$, which holds due to our requirement that $\bb < 1 - \frac{3}{2}H$. This completes the proof in the case of no isolated intervals.

Let us now suppose that isolated intervals are present. Essentially our task is to "remove the isolated intervals"\footnotemark \footnotetext{This process is described in greater detail than here in \cite{markowsky2008renormalization}, with a picture included.}, that is, to integrate out the variables corresponding to isolated intervals in order to reduce a configuration to a smaller one. Note that our restrictions on $A$ imply that if $[r_k,s_k]$ is an isolated interval then $s_k > r_k + \ka$. It is this separation which will allow us to substitute the bound

\begin{equation} \label{clay2}
\int_{\ka}^{t} e^{-C |u_j|^2 a^{2H}} da \leq \frac{Ce^{-C\ka^{2H}|u_j|^2}}{1+|u_j|^{1/H}}.
\end{equation}
in place of \rrr{clay2} when $u_j$ is the lone $u$ containing the isolated $p_k$ as a term. In order to bound the variation in $y$ and $t$, we follow the steps \rrr{try} through \rrr{ek} in order to reduce our problem to showing that

\begin{equation} \label{ek66}
\int_{\RR^n} \frac{\prod_{j \in \II} e^{-C\ka^{2H}|u_{k_j}|^2}\prod_{j=1}^{n}du_{k_j}}{\prod_{k=1}^{2n-1}(1+|u_{k}|^{\frac{1}{H}-\frac{(1+\la)m_k}{2}})}
\end{equation}
is finite, where $\II \subseteq \{1,2, \ldots, n\}$ is the set of all $j$ such that $u_{k_j}$ contains an isolated $p$ value. We may now integrate with respect to the $u_{k_j}$'s in $\II$ and obtain a constant bound. What remains is an integral corresponding to a smaller configuration of $p_k$'s but with several extra powers of $|u_k|$'s in the bottom; this occurs since if $u_{k_j}$ contains an isolated $p$ value then $u_{k_j-1} = u_{k_j+1}$. If the new configuration contains any isolated intervals itself, this "doubling" of the powers in the denominator allows us to remove the isolated intervals of the new configuration, since we will have a term at least as convergent as $(1+|u_{k}|^{2(\frac{1}{H}-(1+\la))})$ in the denominator, and $2(\frac{1}{H}-(1+\la))$ is greater than 1 by our assumption on $\la$. This reduces the configuration to a still smaller one. We may continue in this manner until we have either integrated out all of the $u$'s, in which case we are done, or we have arrived at a configuration with no isolated intervals. In the latter case we may then appeal to the work done for the no isolated intervals case, and again we are done. The same process works for the variation in $t$, except that the isolated intervals in the later configurations now have a term at least as convergent as $(1+|u_{k}|^{2(\frac{\ga}{H}-1)})$ in the denominator. Again, the exponent $2(\frac{\ga}{H}-1)$ is at least 1 due to our assumption on $\bb$. This completes the proof of Proposition \ref{bg3} (i).
\end{proof}

\vski

\begin{proof}[Proof of Proposition \ref{bg3}(ii)]  We follow the method presented in \cite{rosen2005}. For the variation in $y$, write

\begin{equation} \label{try22}
\begin{split}
E&[|\aa_{\eps, t}'(y,A)-\aa_{\eps, t}'(y',A)|^n] \leq \\ & \frac{1}{(2\pi)^n} \Big|\int_{(A_1^1)^n} \int_{\RR^n} E[\prod_{k=1}^n e^{ip_k(B_{s_k}-B_{r_k})}] \prod_{k=1}^n |p_k| |e^{ip_ky}-e^{ip_ky'}| e^{-\eps p_k^2/2} dp_kdr_kds_k .
\end{split}
\end{equation}

We can write

\begin{equation} \label{dex}
\begin{split}
E[\prod_{k=1}^n e^{ip_k(B_{s_k}-B_{r_k})}] & = E[\prod_{k=1}^n e^{ip_k(B_{1/2}-B_{r_k})}\prod_{k=1}^n e^{ip_k(B_{s_k}-B_{1/2})}] \\
& = E[\prod_{k=1}^n e^{iu'_k(B_{\ell'_k}-B_{\ell'_{k-1}})}\prod_{k=1}^n e^{iu_k(B_{\ell_k}-B_{\ell_{k-1}})}],
\end{split}
\end{equation}
where $\ell'_0 < \ell'_1 < \ldots < \ell'_k=1/2$ is a relabeling of $\{r_1, \ldots , r_k, 1/2\}$ and $1/2=\ell_0 < \ell_1 < \ldots < \ell_k$ is a relabeling of $\{1/2,s_1, \ldots , s_k\}$ and each $u_k$ and $u'_k$ is the properly chosen linear combination of $p_j$'s to make \rrr{dex} valid. We may then apply local nondeterminism in the form of \rrr{van} to obtain

\begin{equation} \label{}
E[\prod_{k=1}^n e^{ip_k(B_{s_k}-B_{r_k})}] \leq e^{-C \sum_{k=1}^{n} |u_k|^2 (\ell_{j}-\ell_{j-1})^{2H}}e^{-C \sum_{k=1}^{n} |u'_k|^2 (\ell'_{j}-\ell'_{j-1})^{2H}}.
\end{equation}

We return to \rrr{try22} to obtain

\begin{equation} \label{try23}
\begin{split}
E&[|\aa_{\eps, t}'(y,A)-\aa_{\eps, t}'(y',A)|^n]  \\ & \leq C|y-y'|^{n\la} \int_{\RR^n}  \frac{\prod_{k=1}^n|p_k|^{1+\la}dp_k}{\prod_{k=1}^n (1+|u_k|^{1/H})\prod_{k=1}^n (1+|u'_k|^{1/H})} \\
 & \leq C |y-y'|^{n\la}\Big| \! \Big| \frac{\prod_{k=1}^n |p_k|^{(1+\la)/2}}{\prod_{k=1}^n (1+|u_k|^{1/H})} \Big| \! \Big|_2 \Big| \! \Big| \frac{\prod_{k=1}^n |p_k|^{(1+\la)/2}}{\prod_{k=1}^n (1+|u_k|^{1/H})} \Big| \! \Big|_2 \\
 & \leq C |y-y'|^{n\la}\Big| \! \Big| \prod_{k=1}^n\frac{1+|u_k|^{(1+\la)/2}+|u_k|^{(1+\la)}}{(1+|u_k|^{1/H})} \Big| \! \Big|_2 \Big| \! \Big| \prod_{k=1}^n \frac{ 1+|u'_k|^{(1+\la)/2}+|u'_k|^{(1+\la)}}{(1+|u_k|^{1/H})} \Big| \! \Big|_2;
\end{split}
\end{equation}
the first inequality employs the bounds \rrr{bnd76} and \rrr{clay}, the second is Cauchy-Schwarz, and the third is due to the fact that each $p$ value can be realized as $u_k-u_{k-1}$ and $u'_{k'}-u'_{k'-1}$ for some $k,k'$. The condition $\la < \frac{1}{H} - \frac{3}{2}$ implies that the integrals in the last expression of \rrr{try23} are finite, and we have therefore shown that the variation in $y$ satisfies \rrr{rundmc}. As before, the same argument with \rrr{bnd77} in place of \rrr{bnd76} handles the variation in $\eps$. For the variation in $t$, performing the steps \rrr{try100} and \rrr{try25} and then following with the method in \rrr{try23} gives us

\begin{equation} \label{try23}
\begin{split}
E&[|\aa_{\eps, t}'(y,A)-\aa_{\eps, t'}'(y,A)|^n]  \\ & \leq C|t-t'|^{n\bb} \Big| \! \Big| \prod_{k=1}^n\frac{1+|u_k|^{1/2}+|u_k|}{(1+|u_k|^{\ga/H})} \Big| \! \Big|_2 \Big| \! \Big| \prod_{k=1}^n \frac{ 1+|u'_k|^{1/2}+|u'_k|}{(1+|u_k|^{\ga/H})} \Big| \! \Big|_2,
\end{split}
\end{equation}
with $\ga = 1 - \bb$. Again the condition $\bb < 1- \frac{3}{2}H$ implies that these integrals are finite. This completes the proof of Proposition \ref{bg3} (ii)
\end{proof}

We will now discuss the possible relation of this proposition to the conjecture. To begin with, an examination of the proof of Proposition \ref{bg3}$(i)$ shows that configurations without isolated intervals can be uniformly bounded as $\kappa \lar 0$; the difficulty therefore lies with the configurations containing isolated intervals. One approach to the proof of the conjecture might be to show that the renormalization, i.e., the subtraction of the term $t \E[\dslt(y)]$, cancels with integrals over configurations with isolated intervals.

Another possibility is to use Proposition \ref{bg3}$(ii)$ and appeal to the work done by Rosen in the $H=1/2$ case. Rosen began by defining
\be A_k^j := [(2k-2)2^{-j},(2k-1)2^{-j}] \times [(2k-1)2^{-j},(2k)2^{-j}].\ee
A simple scaling and Proposition \ref{bg3}$(ii)$ show that $\dslt(y,A^j_k)$ exists and is jointly continuous in $y$ and $t$.  Note that
 $\DD = \cup_{j=1}^{\ff} \cup_{k=1}^{2^{j-1}} A_k^j$, and observe that when $H=1/2$ and $j$ is fixed,
\be\label{indsets}
 \left\{\aaa_{t,\eps}(y,A_{k}^{j})\right\}_{1\le k\le 2^{j-1}}
 \ee
  are independent. In \cite{rosen2005},  this independence was used  together with the following lemma \cite[Prop. 3.5.2]{garsia1970}, to establish $L^p$ bounds and H\"older continuity for  $\aa_{t,\eps}(y, A)$ which sufficed to
 show Kolmogorov's continuity criterion for \eqref{renormalzation}.
\begin{lemma}
Suppose $X_1, \ldots, X_n$ are independent with $E[X_j]=0$ for all $j$ and $M = \max_{1 \leq j \leq n} \E[X_j^{2p}] < \ff$, with $p$ a positive integer. Let $a_1, \ldots, a_n \in \RR$. Then
\begin{equation} \label{}
E[|a_1 X_1 + \ldots + a_n X_n|^{2p}] \leq C(p)M(a_1^2 + \ldots + a_n^2)^p.
\end{equation}
\end{lemma}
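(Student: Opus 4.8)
The plan is to prove this directly from the multinomial expansion of $(a_1X_1+\cdots+a_nX_n)^{2p}$, using independence and the mean-zero hypothesis to kill most terms, and then a combinatorial estimate to recognize what survives as a multiple of $(a_1^2+\cdots+a_n^2)^p$. (This is a Rosenthal / Marcinkiewicz--Zygmund type inequality, and the argument is self-contained.) First I would write, with the multi-index notation $\binom{2p}{\alpha}=(2p)!/(\alpha_1!\cdots\alpha_n!)$,
\[
\E\big[(a_1X_1+\cdots+a_nX_n)^{2p}\big]=\sum_{\alpha:\,|\alpha|=2p}\binom{2p}{\alpha}\Big(\prod_{j=1}^n a_j^{\alpha_j}\Big)\prod_{j=1}^n\E[X_j^{\alpha_j}],
\]
the sum over multi-indices $\alpha=(\alpha_1,\dots,\alpha_n)$ with $|\alpha|=\sum_j\alpha_j=2p$, where independence has been used to factor the expectation.

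Since $\E[X_j]=0$, every $\alpha$ having a coordinate equal to $1$ contributes zero, so only the \emph{surviving} multi-indices — those whose nonzero coordinates are all at least $2$ — remain, and such an $\alpha$ has support of size at most $p$. For the moment factors, Lyapunov's inequality gives $|\E[X_j^{\alpha_j}]|\le\E[|X_j|^{\alpha_j}]\le(\E[X_j^{2p}])^{\alpha_j/(2p)}\le M^{\alpha_j/(2p)}$ whenever $2\le\alpha_j\le 2p$; multiplying over the indices in the support of a surviving $\alpha$, whose exponents sum to $2p$, yields $\prod_j|\E[X_j^{\alpha_j}]|\le M$. Hence the left-hand side is at most $M\sum_{\alpha\text{ surviving}}\binom{2p}{\alpha}\prod_j|a_j|^{\alpha_j}$.

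The remaining, and main, step is the deterministic estimate $\sum_{\alpha\text{ surviving}}\binom{2p}{\alpha}\prod_j|a_j|^{\alpha_j}\le C(p)\big(\sum_j a_j^2\big)^p$. The naive idea of bounding each summand by $(\sum_j a_j^2)^p$ and counting terms fails, because the number of surviving multi-indices grows with $n$; the real content is to absorb that $n$-dependence into an elementary symmetric polynomial. I would group the sum by the support $S$ of $\alpha$, with $|S|=k\le p$: on a fixed $S$, write $|a_j|^{\alpha_j}=a_j^2\,|a_j|^{\alpha_j-2}$ and use $|a_j|\le(\sum_i a_i^2)^{1/2}$ to extract a factor $\big(\prod_{j\in S}a_j^2\big)\big(\sum_i a_i^2\big)^{p-k}$, leaving $\sum_{\alpha:\,\mathrm{supp}(\alpha)=S}\binom{2p}{\alpha}\le k^{2p}\le p^{2p}$. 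Summing $\prod_{j\in S}a_j^2$ over all $S$ with $|S|=k$ gives the elementary symmetric polynomial $e_k(a_1^2,\dots,a_n^2)\le\big(\sum_i a_i^2\big)^k$, so the total contribution of a given $k$ is at most $p^{2p}\big(\sum_i a_i^2\big)^p$; summing over $k=1,\dots,p$ yields the claim with, for instance, $C(p)=p^{2p+1}$. Chaining the three steps together completes the proof, and the only genuine subtlety is the bookkeeping in this last step, which is precisely where the dependence on the number of variables has to be handled correctly.
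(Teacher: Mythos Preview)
Your argument is correct. The multinomial expansion, the elimination of terms with an index appearing once via $\E[X_j]=0$, the Lyapunov bound $\prod_j |\E[X_j^{\alpha_j}]|\le M$, and the combinatorial estimate grouping by support size all go through as written; in particular the key point that the $n$-dependence is absorbed by $\sum_{|S|=k}\prod_{j\in S}a_j^2=e_k(a_1^2,\dots,a_n^2)\le(\sum_i a_i^2)^k$ is exactly right, and the crude bound $\sum_{\mathrm{supp}(\alpha)=S}\binom{2p}{\alpha}\le k^{2p}\le p^{2p}$ is valid since the full multinomial sum over $\alpha$ supported on $S$ equals $k^{2p}$.

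As for comparison with the paper: there is nothing to compare. The paper does not prove this lemma; it is simply quoted from Garsia \cite[Prop.~3.5.2]{garsia1970} as a tool used by Rosen. Your proof is therefore a genuine addition --- a clean, self-contained Marcinkiewicz--Zygmund/Rosenthal-type argument --- where the paper only gives a citation.
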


The difficulty when $1/2<H<2/3$ is that we no longer have independence in \eqref{indsets}; however, it may be that the local nondeterminism of FBM is enough. Perhaps a substitute for the above lemma can be deduced under the weaker condition of local nondeterminism, and this could be used to prove the conjecture.


\begin{thebibliography}{Mar08b}

\bibitem[Ber74]{berman1974}
S.~Berman.
\newblock {Local nondeterminism and local times of Gaussian processes}.
\newblock {\em Indiana University Mathematics Journal}, 23(1):69--94, 1974.

\bibitem[Che10]{chen2010random}
X.~Chen.
\newblock {\em Random walk intersections: large deviations and related topics},
  volume 157.
\newblock Amer Mathematical Society, 2010.

\bibitem[Dyn88]{dynkin1988self}
E.B. Dynkin.
\newblock {Self-intersection gauge for random walks and for Brownian motion}.
\newblock {\em The Annals of Probability}, pages 1--57, 1988.

\bibitem[Gar70]{garsia1970}
A.M. Garsia.
\newblock {\em Topics in almost everywhere convergence}.
\newblock Markham, 1970.

\bibitem[HN05]{hu2005}
Y.~Hu and D.~Nualart.
\newblock {Renormalized self-intersection local time for fractional Brownian
  motion}.
\newblock {\em Annals of Probability}, 33(3):948--983, 2005.

\bibitem[HN10]{hu2010central}
Y.~Hu and D.~Nualart.
\newblock {Central limit theorem for the third moment in space of the Brownian
  local time increments}.
\newblock {\em Electronic Communications in Probability}, 15:396--410, 2010.

\bibitem[Hu01]{hu2001}
Y.~Hu.
\newblock {Self-intersection local time of fractional Brownian motions -- via
  chaos expansion}.
\newblock {\em Journal of Mathematics-Kyoto University}, 41(2):233--250, 2001.

\bibitem[JM12]{jung2012tanaka}
P.~Jung and G.~Markowsky.
\newblock On the tanaka formula for the derivative of self-intersection local
  time of fbm.
\newblock {\em Arxiv preprint arXiv:1205.5551}, 2012.

\bibitem[Mar08a]{markowsky2008proof}
G.~Markowsky.
\newblock {Proof of a Tanaka-like formula stated by J. Rosen in S{\'e}minaire
  XXXVIII}.
\newblock {\em S{\'e}minaire de Probabilit{\'e}s XLI}, pages 199--202, 2008.

\bibitem[Mar08b]{markowsky2008renormalization}
G.~Markowsky.
\newblock {Renormalization and convergence in law for the derivative of
  intersection local time in $\mathbb{R}^2$}.
\newblock {\em Stochastic Processes and their Applications}, 118(9):1552--1585,
  2008.

\bibitem[Ros87]{rosen1987}
J.~Rosen.
\newblock {The intersection local time of fractional Brownian motion in the
  plane}.
\newblock {\em Journal of Multivariate Analysis}, 23(1):37--46, 1987.

\bibitem[Ros88]{rosen1988}
J.~Rosen.
\newblock {Limit laws for the intersection local time of stable processes in
  $R^2$}.
\newblock {\em Stochastics}, 23(2):219--240, 1988.

\bibitem[Ros05]{rosen2005}
J.~Rosen.
\newblock {Derivatives of self-intersection local times}.
\newblock {\em S{\'e}minaire de Probabilit{\'e}s XXXVIII}, pages 263--281,
  2005.

\bibitem[RW91]{rogers1991local}
L.C.G. Rogers and J.B. Walsh.
\newblock {Local time and stochastic area integrals}.
\newblock {\em The Annals of Probability}, 19(2):457--482, 1991.

\bibitem[RY99]{revuz1999}
D.~Revuz and M.~Yor.
\newblock {\em {Continuous martingales and Brownian motion}}.
\newblock Springer Verlag, 1999.

\bibitem[Var69]{varad1969}
S.~R.~S. Varadhan.
\newblock {Appendix to Euclidian quantum feld theory by K. Symanzy}.
\newblock {\em Local Quantum Theory}, 1969.

\bibitem[WX10]{wu2010regularity}
D.~Wu and Y.~Xiao.
\newblock Regularity of intersection local times of fractional brownian
  motions.
\newblock {\em Journal of Theoretical Probability}, 23(4):972--1001, 2010.

\bibitem[Xia97]{xiao1997holder}
Y.~Xiao.
\newblock {Holder conditions for the local times and the Hausdorff measure of
  the level sets of Gaussian random fields}.
\newblock {\em Probability Theory and Related Fields}, 109(1):129--157, 1997.

\bibitem[YLY09]{yan2009integration}
L.~Yan, J.~Liu, and X.~Yang.
\newblock {Integration with respect to fractional local time with Hurst index
  $1/2< H< 1$}.
\newblock {\em Potential Analysis}, 30(2):115--138, 2009.

\bibitem[YYL08]{yan2008}
L.~Yan, X.~Yang, and Y.~Lu.
\newblock {p-variation of an integral functional driven by fractional Brownian
  motion}.
\newblock {\em Statistics \& Probability Letters}, 78(9):1148--1157, 2008.

\end{thebibliography}
\end{document}